\documentclass[11pt,reqno]{amsart}
\usepackage[utf8]{inputenc}
\usepackage[T1]{fontenc}
\usepackage{lmodern}
\usepackage{amssymb,mathrsfs}
\usepackage[margin=1in,headsep=0.25in]{geometry}
\usepackage[colorlinks,linkcolor=blue,citecolor=red,urlcolor=blue]{hyperref}

\theoremstyle{plain}
\newtheorem{theorem}{Theorem}[section]
\newtheorem{lemma}[theorem]{Lemma}
\newtheorem{proposition}[theorem]{Proposition}

\theoremstyle{remark}
\newtheorem{remark}[theorem]{Remark}
\numberwithin{equation}{section}

\newcommand{\E}{\mathbb E}
\newcommand{\R}{\mathbb R}

\newcommand{\norm}[1]{\left\lVert #1\right\rVert}

\newcommand{\dd}{\,\mathrm d}
\newcommand{\e}{\mathrm e}

\author[Shuo Qin]{Shuo Qin}
\address[Shuo Qin]{Beijing Institute of Mathematical Sciences and Applications, and Yau Mathematical Sciences Center, Tsinghua University}
\email{qinshuo@bimsa.cn}

\title{Homogenization of weakly reinforced\\P\'olya urns on countable networks}
\date{}

\subjclass[2020]{60K35, 60J27, 62L20}
\keywords{P\'olya urn, reinforced process, stochastic approximation, logarithmic norm, Liouville theorem, homogenization}

\begin{document}
\begin{abstract}
We consider weakly reinforced P\'olya urns on a countable graph of uniformly bounded degree. The vertices have positive bounded firing rates, which need not be bounded away from zero. For every reinforcement exponent $\alpha\in[0,1)$ and every deterministic choice of finite positive initial weights, the normalized edge weights converge almost surely, coordinate-wise, to a deterministic vector independent of the initial weights. We identify this vector as the unique non-vanishing equilibrium of the limiting equation. This proves the homogenization conjecture of Couzini\'e and Hirsch, including its formulation for non-uniform firing rates in the MATRIX open-problems report. The main step is a Liouville theorem for the limiting equation, established by a logarithmic-norm comparison of complete solutions using the row-sum balance at each vertex.
\end{abstract}
\maketitle

\section{Introduction}
\label{sec:introduction}

\subsection{Definition and main result}

In reinforced random processes, outcomes that have occurred more often receive greater weight in subsequent choices. A paradigmatic example is P\'olya's urn, in which each drawn ball is returned to the urn together with another ball of the same color. We refer to \cite{MR2282181} for a survey of reinforced processes and their applications.

In this paper, we study the graph version of the $(W,A)$-reinforcement model (WARM), introduced by van der Hofstad, Holmes, Kuznetsov and Ruszel \cite{MR3543903} as a toy model for the formation of neuronal connections. The colors are the edges of a graph. At each step, the edges incident to a randomly selected vertex compete for reinforcement, with probabilities proportional to their current weights raised to a power $\alpha$. The authors studied the stability of equilibria and phase transitions in the strong reinforcement regime $\alpha>1$.

Couzini\'e and Hirsch \cite{MR4275961} studied the model on countable networks, in continuous time and with unit initial weights and unit firing rates. Here we allow general positive initial weights and bounded firing rates. Let $G=(V,E)$ be a countable connected simple graph such that
\begin{equation}\label{eq:degree-bound}
    1\leq\deg(v)\leq\Delta<\infty
    \qquad\text{for every }v\in V.
\end{equation}
Equip each vertex $v$ with an independent Poisson clock of rate $p_v$, called its \emph{firing rate}, and assume that
\begin{equation}\label{eq:rate-bound}
    0<p_v\leq p_*<\infty
    \qquad\text{for every }v\in V.
\end{equation}
For $v\in V$, write $E_v:=\{e\in E:v\in e\}$. The initial weights $N_0(e)$ are deterministic and satisfy $0<N_0(e)<\infty$ for every edge. No uniform bounds on these initial weights are imposed. Attach independent uniform marks on $[0,1]$ to all clock rings, independently of the clocks, and fix an ordering of $E_v$ at every vertex. If the clock at $v$ rings at time $t$, use its mark to select $e\in E_v$ with conditional probability
\begin{equation}\label{eq:selection-probability}
    \frac{N_{t-}(e)^\alpha}
    {\sum_{\tilde e\in E_v}N_{t-}(\tilde e)^\alpha},
\end{equation}
by partitioning $[0,1]$ into intervals of these lengths, and increase $N_t(e)$ by one. Throughout the paper, the reinforcement exponent lies in $[0,1)$, the weak reinforcement regime. 

When the graph $G$ is finite and all vertices fire at the same rate, the associated discrete process is the discrete-time WARM process studied in \cite{MR3543903, MR3631695}. The graphical construction above was given by Hirsch, Holmes and Kleptsyn, and they showed in \cite[Section~2]{MR4254478} that the process is also well-defined in the infinite graph setting: the weights are built from the clock rings and their selection marks, and an exploration argument excludes infinite chains of dependencies among the clocks.  This exploration uses only the clocks, so it also applies to the arbitrary finite positive initial weights considered here. Note that, although $\alpha >1$ is assumed in \cite{MR4254478}, the case $\alpha \in [0,1]$ is covered by the same proof. 

We write
\[
    X_t(e):=\frac{N_t(e)}t,\qquad t>0,
\]
for the normalized edge weights. For a positive vector $x=(x_e)_{e\in E}$, define
\begin{equation}\label{eq:S-Phi-F}
    S_v(x):=\sum_{f\in E_v}x_f^\alpha,\qquad
    \Phi_e(x):=\sum_{v\in e}\frac{p_vx_e^\alpha}{S_v(x)},\qquad
    F_e(x):=\Phi_e(x)-x_e.
\end{equation}
After a logarithmic time change, the normalized weights satisfy a stochastic-approximation equation with drift $F$, as noted in \cite[Section~5]{Aletti2025}. We derive the exact equation in \eqref{eq:log-SA}. The associated differential equation is
\begin{equation}\label{eq:x-ode}
    \dot x_e=F_e(x)
    =-x_e+\sum_{v\in e}\frac{p_vx_e^\alpha}{S_v(x)},
    \qquad e\in E.
\end{equation}
A vector $\mu\in(0,\infty)^E$ is called a \emph{non-vanishing equilibrium} if $F(\mu)=0$ coordinate-wise. Here non-vanishing means that every coordinate is strictly positive, without requiring a uniform positive lower bound.

Couzini\'e and Hirsch proved homogenization on every graph of uniformly bounded degree when $\alpha<1/2$, extended the range slightly beyond $1/2$ on regular graphs, and treated the whole interval $\alpha<1$ on $\mathbb Z$. They conjectured \cite[Conjecture~2.1]{MR4275961} that homogenization holds on every countable graph of uniformly bounded degree throughout the weak reinforcement regime: there is a unique non-vanishing equilibrium $\mu$, and $X_t(e)\to\mu_e$ almost surely for every edge $e$. A formulation with arbitrary positive bounded firing rates appears in \cite[Section~5, Conjecture~1]{Aletti2025}. Those rates are not assumed to be bounded away from zero. We prove the conjecture in this generality, and allow arbitrary deterministic finite positive initial weights.

\begin{theorem}[Homogenization]\label{thm:main}
Let $G$ and $(p_v)_{v\in V}$ satisfy \eqref{eq:degree-bound} and \eqref{eq:rate-bound}, and let $\alpha\in[0,1)$. There is a unique non-vanishing equilibrium $\mu$ of \eqref{eq:x-ode}. For every deterministic choice of finite positive initial weights, almost surely,
\begin{equation}\label{eq:main-convergence}
    \lim_{t\to\infty}\frac{N_t(e)}t=\mu_e
    \qquad\text{for every }e\in E.
\end{equation}
\end{theorem}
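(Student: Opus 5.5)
The plan has four steps: a logarithmic time change, an asymptotic pseudotrajectory argument, a Liouville theorem for the limiting flow, and a separate existence/uniqueness argument for the equilibrium.

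\emph{Step 1: time change.} Set $s=\log t$ and $x(s)=X_{\e^s}$. Write $N_t(e)=N_0(e)+M_t(e)+\int_0^t\Phi$-type compensator, where $M$ is a martingale with jumps of size one and bracket at most $2p_*t$. Then $x$ satisfies coordinatewise
\[
\dot x_e(s)=F_e(x(s))+\text{noise}_e(s),
\]
and the integrated noise over windows $[s,s+T]$ tends to zero almost surely for each fixed $e$. This follows from a Doob/Borel--Cantelli bound on $M_t(e)/t$ along geometric times, together with $N_0(e)/t\to0$.

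\emph{Step 2: compactness and asymptotic pseudotrajectories.} Coordinates are bounded, since $0\le\Phi_e\le 2p_*$ and hence $\limsup_t X_t(e)\le 2p_*$ a.s. We work in the product topology on $[0,2p_*+1]^E$, which is compact and metrizable because $E$ is countable. Each $F_e$ depends on finitely many coordinates and is continuous where the relevant coordinates are positive. The first genuine issue is lower bounds: a coordinate $x_e$ could approach $0$, where $\Phi_e$ is singular.

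For $\alpha<1$ this is handled by noting that when $x_e$ is small, $\Phi_e(x)\ge p_vx_e^\alpha/(\Delta(2p_*+1)^\alpha)$, which dominates $x_e$. So $\liminf X_t(e)\ge c_e>0$ a.s., with $c_e$ depending only on $p_v$, $\Delta$, $\alpha$ and $p_*$. I would prove this by a standard stochastic-approximation argument on the single coordinate: the drift is bounded below by $c\,x_e^\alpha-x_e$, and one can compare with a one-dimensional ODE.

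With this, the shifted paths $x(s+\cdot)$ are relatively compact in $C(\R,\prod_e[c_e,2p_*+1])$ with the product topology. Every subsequential limit is a \emph{complete} solution of \eqref{eq:x-ode} on $\R$ lying in this box.

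\emph{Step 3: Liouville theorem (main obstacle).} I expect this to be the hardest step. The claim is that every complete solution with coordinates in $\prod_e[c_e,C]$ is constant and equal to $\mu$.

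I would take two complete solutions $x,y$ and set $u_e=\log x_e-\log y_e$. Then
\[
\dot u_e=-\Bigl(\sum_{v\in e}\tfrac{p_vx_e^{\alpha}}{S_v(x)}\Bigr)\Big/x_e\cdot(\ldots)
\]
and one compares the quantity $\sup_e|u_e|$, which is finite only if lower bounds are uniform. Because the $c_e$ are not uniform, I would instead use a weighted norm $\sup_e w_e|u_e|$ with summable-type weights decaying along graph distance from a root. The bounded degree controls the error from varying weights.

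The key computation is at an edge $e$ maximizing $u_e$, in the Dini-derivative sense. There one has $\frac{d}{ds}\log(x_e^\alpha/S_v(x))\le \alpha\dot{}\,$-type terms. The row-sum balance $\sum_{f\in E_v}x_f^\alpha/S_v=1$ shows that $x_e^\alpha/S_v(x)$ over $y_e^\alpha/S_v(y)$ is at most $\e^{\alpha u_e}\cdot\e^{-\alpha\min_f u_f}$, and is in fact bounded by $\e^{\alpha(1-\text{weight})\cdot 2\max|u|}$. This gives a logarithmic-norm bound $\frac{d}{ds}\max u\le -(1-\alpha)\,\min(\Phi_e/x_e)\,\max u$, a contraction. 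Running this backward in time from $-\infty$, boundedness of $u$ forces $u\equiv0$, so any two complete solutions coincide. The delicate part is the infinite sup and the non-uniform rates, handled by the weights together with a compact-exhaustion/Gronwall error term that vanishes as the root-distance cutoff grows.

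\emph{Step 4: existence and uniqueness of $\mu$, and conclusion.} Existence of a non-vanishing equilibrium comes from Schauder--Tychonoff applied to the time-1 flow map on the compact convex box, or as the limit of equilibria on finite exhaustions. Uniqueness follows from Step 3, since an equilibrium is a complete solution. Since every limit point of $x(s)$ is then $\mu$, we obtain \eqref{eq:main-convergence}.
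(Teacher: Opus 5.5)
Your overall architecture (logarithmic time, persistence, complete limit trajectories, a Liouville theorem) matches the paper. Step 3, however, has a genuine gap, and it sits exactly where the conjecture is hard: the instantaneous contraction you claim for log-differences is false once $\alpha\ge 1/2$.

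Write $\widetilde x$ for the second solution, let $U=\max_f|u_f|$ be attained at $e$ with $u_e=U$, and set $w_v=\widetilde x_e^\alpha/S_v(\widetilde x)$ for $v\in e$. The normalization $\sum_{f\in E_v}x_f^\alpha/S_v(x)=1$ gives at best $\frac{x_e^\alpha/S_v(x)}{\widetilde x_e^\alpha/S_v(\widetilde x)}\le\e^{\alpha U}/\bigl(w_v\e^{\alpha U}+(1-w_v)\e^{-\alpha U}\bigr)=1+2\alpha(1-w_v)U+O(U^2)$. The extra factor $x_e^{-1}$ in $\Phi_e(x)/x_e$ contributes $\e^{-U}$. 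So endpoint $v$ contributes the coefficient $2\alpha(1-w_v)-1$, not $-(1-\alpha)$. This is negative for all $w_v$ only when $\alpha<1/2$, which is the range Couzini\'e and Hirsch already obtained.

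The bound is sharp, not an artifact of the estimate. On a $d$-regular graph with unit rates, linearizing at $\mu\equiv2/d$ gives $\dot u=-(1-\alpha)u-\frac{\alpha}{2d}\mathsf M^{\mathsf T}\mathsf M u$. Take $u_e=U$ and $u_f=-U$ on the $2(d-1)$ edges adjacent to $e$. This configuration is admissible and gives $\dot u_e=\bigl(-1+2\alpha(d-1)/d\bigr)U>0$ for $\alpha>d/(2(d-1))$.

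Weighted norms do not rescue this. With weights $\omega_e$ and $\phi=1/\omega$, a negative weighted logarithmic norm needs $\frac{\alpha}{2d}A_L\phi\le(1-\alpha+\alpha/d-\varepsilon)\phi$, where $A_L$ is the adjacency operator of the line graph. On $\mathbb Z^2$ the line graph is amenable and $6$-regular. A positive $\phi$ with $A_L\phi\le\lambda\phi$ forces $(A_L^n)_{ee}\le\lambda^n$, which is impossible for $\lambda<6$, so such weights exist only if $\alpha<2/3$. Your weights and cutoffs address the non-uniform lower bounds, which is a side issue, not this one.

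What is missing is the structure of Section~\ref{sec:ode}, which has four ingredients.
\begin{itemize}
\item Pass to $y=x^{1-\alpha}$. This turns the forcing into a sum $\sum_{v\in e}\rho_v(y)$ of vertex terms and keeps $y\le(2p_*)^\gamma$. The plain sup norm of $h=y-\widetilde y$ is then finite, and no weights are needed.
\item Use completeness, integrating from $-\infty$, to get $y_e\ge\gamma\sum_{v\in e}\rho_v(y)$. This bounds the row sums of the combined coefficients $c_{ve}$ by $K_\beta/\gamma$, with no uniform lower bound on rates or coordinates.
\item Write $\dot h=-\gamma(h+\mathsf M^{\mathsf T}\mathcal C_rh)$ and pass, by variation of constants, to vertex variables $q$ with $h_e=\e^{-\gamma(r-s)}h_e(s)-q_u-q_v$. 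On vertices the interaction $\mathcal C_r\mathsf M^{\mathsf T}$ has diagonal coefficient equal to its off-diagonal row sum. This mirrors the balanced signless Laplacian $\mathsf M\mathsf M^{\mathsf T}$, as opposed to the unbalanced $\mathsf M^{\mathsf T}\mathsf M$ on edges above. Hence $-\gamma(\mathrm{Id}+\mathcal C_r\mathsf M^{\mathsf T})$ has $\ell^\infty(V)$ logarithmic norm at most $-\gamma$.
\item The result is only a delayed contraction, $\norm{h(t)}_\infty\le[1+2K_\beta(t-s)]\e^{-\gamma(t-s)}\norm{h(s)}_\infty$. This suffices because $s\to-\infty$ is allowed for complete solutions.
\end{itemize}

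A smaller point concerns your persistence sketch. The drift bound $cx_e^\alpha-x_e$ degenerates at $0$, while your noise control is absolute, so the one-dimensional comparison needs a relative argument, such as the paper's pure-birth coupling.
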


\begin{remark}
After the present work was completed, the author learned that Angel, Hermon and Yao \cite{YaoAngelHermon} independently proved the homogenization conjecture of \cite{MR4275961} by different arguments.
\end{remark}

The equilibrium depends only on $G$, the firing rates and $\alpha$. Thus homogenization here means convergence to a deterministic equilibrium profile independent of the initial weights. Its coordinates need not be equal. Explicit upper and lower bounds on $\mu$ in terms of the firing rates are given in Proposition \ref{prop:equilibrium-existence}. If every vertex has degree $d$ and $p_v\equiv p$, the constant vector $\mu_e=2p/d$ satisfies the equilibrium equation. Theorem \ref{thm:main} therefore identifies the limit explicitly in this case.

\begin{remark}[Boundary equilibria]\label{rem:vanishing-equilibria}
Uniqueness in Theorem \ref{thm:main} is asserted among non-vanishing equilibria. When $0<\alpha<1$, the equation also makes sense for nonnegative weights whenever $S_v(x)>0$ for every vertex $v$. For example, with unit firing rates on $\mathbb Z$, assigning weight $2$ to the edges of a perfect matching and weight $0$ to the remaining edges gives an equilibrium with zero coordinates. Proposition \ref{prop:persistence} excludes such boundary equilibria as limits of the urn process.
\end{remark}

\begin{remark}[The case $\alpha=0$]\label{rem:alpha-zero}
When $\alpha=0$, each incident edge is selected with probability $1/\deg(v)$ at vertex $v$. Poisson thinning and the strong law give, for $e=\{u,v\}$,
\[
    X_t(e)\longrightarrow
    \frac{p_u}{\deg(u)}+\frac{p_v}{\deg(v)}
    \qquad\text{almost surely}.
\]
This vector is also the unique equilibrium. Henceforth we assume $0<\alpha<1$.
\end{remark}

\subsection{Related work}\label{sec:related-work}

Bena\"im, Benjamini, Chen and Lima \cite{MR3346459} studied a related model with weights attached to vertices. Each edge allocates a new ball to one of its endpoints according to their reinforced weights. On finite connected graphs, they proved that the normalized vertex weights converge almost surely to a deterministic vector with strictly positive coordinates for $0<\alpha<1$. The linear case was treated by Chen and Lucas \cite{MR3269167}. Angel, Hermon and Yao \cite{YaoAngelHermon} study the model on countable networks where each edge is equipped with an independent Poisson clock, and prove that in the weak reinforcement regime, the normalized vertex weights on trees converge almost surely coordinate-wise to a positive vector. 

Other interacting reinforcement models exhibit different forms of convergence. Aletti, Crimaldi and Ghiglietti \cite{MR3737938} studied interacting urns whose sampling probabilities are weighted averages of the urn proportions across the network. Under suitable assumptions, the proportions converge almost surely to a common limit, a phenomenon called synchronization. Liggett and Rolles \cite{MR2078537} studied an infinite model of social network formation with reciprocal reinforcement of neighbor choices. They characterized its extremal invariant measures by partitions of the population into finite stars.

For WARM in the strong reinforcement regime, Hirsch, Holmes and Kleptsyn \cite{MR4254478} proved that, on graphs of uniformly bounded degree with unit initial weights and positive bounded firing rates, every connected component of the subgraph of edges ever reinforced is almost surely finite when $\alpha$ is sufficiently large. Their subsequent work \cite{MR4580686} studies the components formed by edges reinforced infinitely often and phase transitions in their structure as $\alpha$ varies.

\subsection{Proof strategy and organization of the paper}\label{sec:proof-strategy}

Our proof follows the dynamical approach to stochastic approximation described in \cite{MR1767993}. On finite graphs, the analyses in \cite{MR3346459, MR3543903} use Lyapunov functions for the associated differential equations. On a countable graph, the sums defining those functions need not be finite. We instead prove a Liouville theorem: every bounded positive complete solution of \eqref{eq:x-ode} equals the unique non-vanishing equilibrium. Here, a complete solution is one defined for every real time. We note that the uniqueness of the equilibrium alone would not suffice for our proof of Theorem \ref{thm:main}.

Section \ref{sec:ode} proves this Liouville theorem. Existence of an equilibrium follows from a fixed-point argument. For a positive solution $x$, the change of variables $y_e=x_e^{1-\alpha}$ removes the edge factor from the reinforcement term:
\[
    \dot y_e=(1-\alpha)\left(-y_e+
    \sum_{v\in e}\frac{p_v}{\sum_{f\in E_v}y_f^{\alpha/(1-\alpha)}}\right).
\]
Completeness of the solution gives uniform control of the vertex contributions in this equation, even without uniform positive lower bounds on the firing rates or the coordinates. Comparing two complete solutions then leads to an equation on the vertices, see \eqref{eq:q-equation}. Each edge contributes once to each endpoint, so the diagonal interaction coefficient equals the sum of the off-diagonal coefficients in its row. A supremum-norm estimate based on this balance gives decay of differences over sufficiently long intervals. Applying it on intervals that extend arbitrarily far into the past proves the Liouville theorem.

Section \ref{sec:sa} connects the urn process to complete solutions of the limiting equation. We first prove persistence, meaning that $\liminf_{t\to\infty}X_t(e)>0$ almost surely for every edge. Couzini\'e and Hirsch proved this for unit initial weights and unit firing rates in \cite[Lemma~3.6]{MR4275961}. A comparison with a pure-birth process covers the more general assumptions here. On logarithmic time, the martingale increments in the stochastic-approximation equation \eqref{eq:log-SA} tend to zero uniformly on bounded time intervals. Local upper bounds, persistence and a diagonal Arzel\`a--Ascoli argument then show that every sequence of time shifts tending to infinity has a subsequence converging, coordinate-wise and uniformly on compact time intervals, to a bounded positive complete solution.  The Liouville theorem identifies all these limits with the unique equilibrium and proves convergence.

\section{The limiting differential equation}\label{sec:ode}

By a solution of \eqref{eq:x-ode} on an interval $I$ we mean a map $x:I\to(0,\infty)^E$ whose coordinates are continuously differentiable and satisfy the equation.   No uniform positive lower bound is required.  A solution is \emph{complete} if $I=\R$.  Such solutions arise as limits of time shifts in Section \ref{sec:sa}.  We call a complete solution bounded and positive if
\[
 x_e(t)>0, \ \forall t\in \R, e\in E \ \text{ and }   \sup_{t\in\R,\,e\in E}x_e(t)<\infty.
\]
The main result of this section is the following.

\begin{theorem}[Liouville property]\label{thm:liouville}
There is a unique non-vanishing equilibrium $\mu$ of \eqref{eq:x-ode}.  Every bounded positive complete solution of \eqref{eq:x-ode} is identically equal to $\mu$.
\end{theorem}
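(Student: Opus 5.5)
We prove existence of an equilibrium, then the Liouville property; uniqueness follows, since an equilibrium is a bounded positive complete solution. The bounds below show that every non-vanishing equilibrium is bounded, hence constant in time and bounded positive.

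\emph{A priori bounds.} Along any positive solution, $\Phi_e(x)\le p_u+p_v\le 2p_*$. Hence $\dot x_e\le 2p_*-x_e$, and every complete solution, as well as every equilibrium, satisfies $x_e\le 2p_*$. For a lower bound, use that $S_v(x)\le \Delta (2p_*)^\alpha$. Then
\[
\dot x_e\ge -x_e+c_e x_e^\alpha,\qquad c_e:=\frac{p_u+p_v}{\Delta(2p_*)^\alpha}.
\]
Since $\alpha<1$, this gives $x_e\ge c_e^{1/(1-\alpha)}$ along complete solutions: if $x_e(t_0)$ were smaller, going backward in time $x_e$ would decrease to $0$ in finite time, contradicting positivity on $\R$. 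The bound is edge-dependent, but it is the uniform control "from completeness" the introduction alludes to.

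\emph{Existence.} Consider the map $x\mapsto\Phi(x)$ on the product box $K=\prod_e[c_e^{1/(1-\alpha)},2p_*]$. This box is compact in the product topology, and $\Phi$ maps $K$ into itself: the upper bound is immediate, and the lower bound follows as above from $\Phi_e(x)\ge c_e x_e^\alpha$. Each coordinate of $\Phi$ depends on finitely many coordinates and is continuous. Schauder--Tychonoff then yields a fixed point $\mu$, which is a non-vanishing equilibrium.

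\emph{Liouville.} Pass to $y_e=x_e^{1-\alpha}$ and write $\beta=\alpha/(1-\alpha)$, so that
\[
\dot y_e=(1-\alpha)\Bigl(-y_e+\sum_{v\in e}\frac{p_v}{T_v(y)}\Bigr),\qquad T_v(y):=\sum_{f\in E_v}y_f^{\beta}.
\]
Take two bounded positive complete solutions $y,\tilde y$, for instance $y$ and the equilibrium. Work with logarithmic differences $\delta_e=\log y_e-\log\tilde y_e$; this is the "logarithmic norm" route, which avoids needing uniform lower bounds. Set $q_v=\log T_v(y)-\log T_v(\tilde y)$. By the mean value theorem, $q_v$ is a convex combination over $f\in E_v$ of $\beta\delta_f$, with weights $w_{vf}$ summing to one. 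The edge equation becomes
\[
\dot\delta_e=(1-\alpha)\sum_{v\in e} a_{ve}\bigl(\cdots\bigr),
\]
with positive coefficients $a_{ve}=p_v/(y_eT_v)$, and the term in parentheses is of the form $e^{-q_v}-1$ up to the relation between $y_e$ and $\tilde y_e$. A more natural object is probably the vertex quantity $r_v=p_v/T_v$. Its dynamics involve $\sum_{f\in E_v} y_f^{\beta}\dot y_f/y_f$, which feeds back the sum $\sum_{u\in f}r_u$ over both endpoints of $f$. In the linearized comparison for $q$, the diagonal coefficient at $v$ (each edge $f\ni v$ contributes its $v$-endpoint once) equals the sum of the off-diagonal coefficients (the other endpoints). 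This is the row-sum balance of \eqref{eq:q-equation}. The comparison thus reads $\dot q_v=-(\text{damping})\,q_v+\sum_{u}b_{vu}(q_u-q_v)$ plus a term of order $\beta(\ldots)$ with a factor strictly below the damping.

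From there a sup-norm argument applies, in the style of a Dini-derivative bound on $M(t)=\sup_v|q_v(t)|$. Because the supremum over infinitely many vertices may not be attained, I would work with near-maximizers, or equivalently use a maximum-principle comparison on each finite set. The aim is $M(t)\le \theta M(s)+\text{small}$ for $t-s\ge L$, with some $\theta<1$. Iterating back to $s\to-\infty$ and using boundedness of $M$ (from the a priori bounds) gives $M\equiv0$, hence $\delta\equiv0$.

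\emph{Main obstacle.} The hard step is getting a contraction factor uniform in $v$ despite $p_v$ not being bounded below. The damping rate $(1-\alpha)$ is uniform, but the coupling coefficients may degenerate. I would first try to show that the $\beta$-weighted feedback is a convex combination multiplied by $\alpha<1$, so the net rate is $(1-\alpha)(1-\alpha)>0$ regardless of $p_v$, with the balance guaranteeing that the off-diagonal terms cost nothing in sup norm.
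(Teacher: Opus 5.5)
Your existence argument, the a priori bounds, and the reduction of uniqueness to the Liouville statement are fine and match the paper. The gap is the Liouville step, which is the entire difficulty once $\alpha\ge1/2$. You quote the row-sum balance, but you never produce a vertex system that has it exactly, and the mechanism you do sketch is not contractive. In your log variables, take $\tilde y$ to be the equilibrium. Already the linearization reads $\dot\delta_e\approx-\gamma\delta_e-\gamma\sum_{v\in e}\frac{\rho_v(\tilde y)}{\tilde y_e}\,q_v$, where $\rho_v=p_v/T_v$, $\sum_{v\in e}\rho_v(\tilde y)/\tilde y_e=1$, and $q_v$ is $\beta$ times a convex combination of the $\delta_f$, $f\in E_v$. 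So the feedback is $\gamma\beta=\alpha$ (not $\gamma\alpha$) times a convex combination, against damping $\gamma=1-\alpha$. The sup-norm rate is therefore $1-2\alpha$, up to the self-term of $\delta_e$ inside $q_v$. On a $d$-regular graph with equal rates, the logarithmic norm of this linearization is $2\alpha-1-2\alpha/d$, which is positive for $\alpha>1/2$ and $d$ large. This is exactly the Couzini\'e--Hirsch barrier. Your residual ``of order $\beta(\ldots)$ with a factor strictly below the damping'' is therefore asserted rather than derived, and it is precisely what fails for $\beta\ge1$. Moreover, $q_v=\log T_v(y)-\log T_v(\tilde y)$ obeys no closed vertex equation, since $\frac{\dd}{\dd t}\log T_v(y)$ involves the individual $y_f$. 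The obstacle you single out (small $p_v$) is not the real one. Log variables even break your last step: the a priori bounds give only $x_e\in[m_e,b_e]$ with $b_e/m_e\asymp b_e^{-\beta}$, so when $\inf_vp_v=0$ they do not control $\sup_e|\delta_e|$ or your $M$. In the paper, ``logarithmic norm'' is the operator quantity $\operatorname{Log}_\infty$, not a logarithm of the variables.

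The missing idea is to eliminate the edge variables exactly. Keep additive differences $h=y-\tilde y$, which are bounded by $(2p_*)^\gamma$. Write $\rho_v(y)-\rho_v(\tilde y)=-(\mathcal C_rh)_v$, where $(\mathcal C_rh)_v=\sum_{e\in E_v}c_{ve}h_e$ with secant coefficients $c_{ve}=p_va_e/(T_v(y)T_v(\tilde y))$ and $a_e=(y_e^\beta-\tilde y_e^\beta)/(y_e-\tilde y_e)$. Let $q_v(r)=\gamma\int_s^r\e^{-\gamma(r-\tau)}(\mathcal C_\tau h(\tau))_v\dd\tau$ be the time-integrated vertex forcing. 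Then $h_e(r)=\e^{-\gamma(r-s)}h_e(s)-q_u(r)-q_v(r)$ for $e=\{u,v\}$. Substituting back gives the closed system $\dot q=-\gamma(\mathrm{Id}+\mathcal C_r\mathsf M^{\mathsf T})q+\gamma\e^{-\gamma(r-s)}\mathcal C_rh(s)$, where $(\mathsf M^{\mathsf T}w)_{\{u,v\}}=w_u+w_v$. The matrix $\mathcal C_r\mathsf M^{\mathsf T}$ is nonnegative, and each diagonal entry equals the off-diagonal row sum. Hence the whole coupling cancels in the sup norm regardless of its size, and $\operatorname{Log}_\infty\le-\gamma$ for every $\alpha<1$, with no residual left to control. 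Completeness enters through the feedback inequality $y_e\ge\gamma\sum_{v\in e}\rho_v(y)$, which follows from the backward integral representation and $\rho_v(y(t-\tau))\ge\e^{-\alpha\tau}\rho_v(y(t))$. It gives $\sum_{e\in E_v}c_{ve}\le\max\{1,\beta\}/\gamma$ uniformly even when $p_v\to0$, which your edge-dependent lower bounds do not. One obtains $\norm{h(t)}_\infty\le(1+2\max\{1,\beta\}(t-s))\e^{-\gamma(t-s)}\norm{h(s)}_\infty$, and letting $s\to-\infty$ finishes the proof.
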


We write $\ell^\infty(E)$ for the space of bounded edge vectors, with norm $\norm z_\infty=\sup_{e\in E}|z_e|$.  The same notation is used for vertex vectors.  The notation $C^1(I;\ell^\infty(E))$ means that the difference quotients converge in this norm and that the derivative is norm-continuous.  Coordinate-wise continuous differentiability alone does not imply this property.

We repeatedly use the variation-of-constants formula.  If a scalar function satisfies $\dot z+az=g$ on $[s,t]$, where $a>0$ is constant and $g$ is continuous, then
\begin{equation}\label{eq:variation-of-constants}
    z(t)=\e^{-a(t-s)}z(s)
       +\int_s^t\e^{-a(t-r)}g(r)\dd r.
\end{equation}
We apply the formula coordinate-wise throughout.

We will also use a criterion for differentiability in the supremum norm.  If $z$ takes values in $\ell^\infty(E)$, each coordinate satisfies $\dot z_e=g_e$, and $g$ is continuous in the supremum norm, then $z\in C^1(I;\ell^\infty(E))$.  Indeed,
\begin{equation}\label{eq:sup-diff}
    \norm{\frac{z(t+\varepsilon)-z(t)}{\varepsilon}-g(t)}_\infty
    \leq\sup_{|r-t|\leq|\varepsilon|}\norm{g(r)-g(t)}_\infty
    \longrightarrow 0
\end{equation}
as $\varepsilon\to0$, for $t$ in the interior of $I$.  The same argument applies on the vertex space.

\subsection{Equilibria and elementary bounds}

For $e=\{u,v\} \in E$, put
\begin{equation}\label{eq:edge-bounds}
    b_e:=p_u+p_v,\qquad
    \eta_e:=\frac{p_u}{\deg(u)}+\frac{p_v}{\deg(v)},\qquad
    m_e:=\left(\frac{\eta_e}{(2 p_{*})^\alpha}\right)^{1/(1-\alpha)}.
\end{equation}
Notice that $0<m_e\leq\eta_e\leq b_e\leq2 p_{*}$. In particular, for any $x \in (0,\infty)^E$, one has $\Phi_e(x) \leq b_e \leq2 p_{*}$.

\begin{proposition}\label{prop:equilibrium-existence}
Equation \eqref{eq:x-ode} has a non-vanishing equilibrium.  Every such equilibrium satisfies
\begin{equation}\label{eq:mu-bounds}
    m_e\leq\mu_e\leq b_e
    \qquad\text{for every }e\in E.
\end{equation}
\end{proposition}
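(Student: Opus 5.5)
The plan is to handle the a priori bounds first, since they also give the invariant set needed for existence. Let $\mu$ be a non-vanishing equilibrium, so that $\mu_e=\Phi_e(\mu)$.

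\emph{Upper bound.} Each summand $p_v\mu_e^\alpha/S_v(\mu)$ is at most $p_v$, because $\mu_e^\alpha\le S_v(\mu)$. Hence $\mu_e\le b_e\le 2p_*$.

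\emph{Lower bound.} Using the upper bound, $S_v(\mu)\le \deg(v)(2p_*)^\alpha$. This gives
\[
\mu_e=\Phi_e(\mu)\ \ge\ \mu_e^\alpha\sum_{v\in e}\frac{p_v}{\deg(v)(2p_*)^\alpha}=\mu_e^\alpha\,\frac{\eta_e}{(2p_*)^\alpha}.
\]
Since $\mu_e>0$ and $\alpha<1$, we may divide by $\mu_e^\alpha$ to get $\mu_e^{1-\alpha}\ge \eta_e/(2p_*)^\alpha$, that is, $\mu_e\ge m_e$. This proves \eqref{eq:mu-bounds}.

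\emph{Existence.} Consider the box $K:=\prod_{e\in E}[m_e,b_e]$, with the product topology. It is compact by Tychonoff, and it is convex. It is a subset of the locally convex space $\R^E$, which is metrizable since $E$ is countable. On $K$ every $S_v$ is a finite sum of continuous functions bounded below by $\deg(v)\min_{f\in E_v}m_f^\alpha>0$. Each $\Phi_e$ depends on only finitely many coordinates, namely those of the edges adjacent to $e$, so $\Phi:K\to\R^E$ is continuous in the product topology. The computation above shows that $\Phi(K)\subset K$:
\begin{itemize}
\item $\Phi_e(x)\le b_e$ always;
\item for $x\in K$ we have $S_v(x)\le\deg(v)(2p_*)^\alpha$ and $x_e\ge m_e$, so
\[
\Phi_e(x)\ \ge\ x_e^\alpha\,\frac{\eta_e}{(2p_*)^\alpha}\ \ge\ m_e^\alpha\, m_e^{1-\alpha}=m_e .
\]
\end{itemize}
The Schauder--Tychonoff fixed point theorem then yields $\mu\in K$ with $\Phi(\mu)=\mu$, i.e.\ $F(\mu)=0$. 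Its coordinates are at least $m_e>0$, so $\mu$ is a non-vanishing equilibrium.

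Alternatively, one can avoid infinite-dimensional fixed point theorems. Apply Brouwer on finite subgraphs: fix the coordinates outside a finite set to equal $b_e$ and find a fixed point of the restricted map on the finite box. The bounds hold uniformly, so a diagonal subsequence along an exhaustion converges coordinatewise. Local dependence of $\Phi$ lets us pass to the limit in $\mu_e=\Phi_e(\mu)$.

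The only delicate point is verifying continuity and invariance of $\Phi$ on $K$ without uniform lower bounds. This works because each coordinate depends on finitely many others and $m_e>0$ pointwise, so neither property needs any uniformity.
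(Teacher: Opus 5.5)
Your proof is correct and matches the paper's argument: the same a priori bounds from $\Phi_e\le b_e$ and $S_v\le\deg(v)(2p_*)^\alpha$, and Schauder--Tychonoff on $\prod_{e}[m_e,b_e]$, with continuity coming from finite local dependence and the pointwise bound $S_v\ge\sum_{f\in E_v}m_f^\alpha>0$. You leave implicit only that the box is nonempty, i.e.\ $m_e\le\eta_e\le b_e$, which the paper records just before the proposition. Your Brouwer-plus-exhaustion alternative is also valid, since the finite-box maps inherit the same invariance.
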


\begin{proof}
We adapt the fixed-point argument of \cite[Proposition~3.2]{MR4275961}.  We use the Schauder--Tychonoff theorem: a continuous map from a nonempty compact convex subset of a locally convex topological vector space into itself has a fixed point.  Here the ambient space is the locally convex space $\R^E$ with the product topology, in which convergence means coordinate-wise convergence. The set
\[
    \mathcal K:=\prod_{e\in E}[m_e,b_e]
\]
is nonempty and convex.  It is compact as a countable product of compact intervals, which can also be seen by a diagonal subsequence argument.  If $x\in\mathcal K$, then $\Phi_e(x)\leq b_e$ and $S_v(x)\leq\deg(v)(2 p_{*})^\alpha$.  Consequently, for $e=\{u,v\}$,
\[
    \Phi_e(x)\geq
    \frac{\eta_e x_e^\alpha}{(2 p_{*})^\alpha}
    \geq
    \frac{\eta_e m_e^\alpha}{(2 p_{*})^\alpha}
    =m_e.
\]
Thus $\Phi$ maps $\mathcal K$ into itself.  For each fixed vertex $v$,
\[
    S_v(x)\geq\sum_{f\in E_v}m_f^\alpha>0,
    \qquad x\in\mathcal K.
\]
Since each coordinate of $\Phi$ depends on finitely many coordinates of $x$, these positive denominator bounds give continuity in the product topology.  The fixed-point theorem therefore gives an equilibrium in $\mathcal K$.

Conversely, a non-vanishing equilibrium satisfies $\mu_e=\Phi_e(\mu)\leq b_e\leq2 p_{*}$.  The same denominator bound $S_v(\mu)\leq\deg(v)(2 p_{*})^\alpha$ then gives
\[
  \mu_e=  \Phi_e(\mu) \geq\frac{\eta_e  \mu_e^{\alpha}}{(2 p_{*})^\alpha}, \quad \text{ i.e. }\  \mu_e^{1-\alpha}\geq\frac{\eta_e }{(2 p_{*})^\alpha}
\]
which proves the lower bound.
\end{proof}

\subsection{An estimate for complete trajectories}

Write
\begin{equation}\label{eq:gamma-beta}
    \gamma:=1-\alpha,\qquad \beta:=\frac{\alpha}{\gamma}.
\end{equation}
For a positive solution $x$, set $y_e=x_e^\gamma$.  Then
\begin{equation}\label{eq:y-ode}
    \dot y_e=\gamma\left(-y_e+\sum_{v\in e}\rho_v(y)\right),
    \qquad \text{where } R_v(y):=\sum_{f\in E_v}y_f^\beta,\quad
    \rho_v(y):=\frac{p_v}{R_v(y)}.
\end{equation}
Since $\gamma\beta=\alpha$, we have $R_v(y)=S_v(x)$.  The power $\gamma$ removes the edge factor from the forcing term.  Indeed, for a general power $z_e=x_e^\kappa$,
\[
    \dot z_e=-\kappa z_e
      +\kappa x_e^{\kappa+\alpha-1}
          \sum_{v\in e}\frac{p_v}{S_v(x)}.
\]
The exponent of $x_e$ vanishes when $\kappa=1-\alpha$. 

\begin{lemma}\label{lem:complete-feedback}
Let $x$ be a bounded positive complete solution of \eqref{eq:x-ode}, and let $y_e=x_e^\gamma$.  Then $m_e\leq x_e(t)\leq b_e$ for every $e\in E$ and $t\in\R$, and
\begin{equation}\label{eq:complete-feedback}
    y_e(t)\geq\gamma\sum_{v\in e}\rho_v(y(t))
    \qquad \text{for any } e\in E,\ t\in\R .
\end{equation}
Moreover,
\begin{equation}\label{eq:log-y-bound}
    -\gamma\leq\frac{\dot y_e(t)}{y_e(t)}\leq\alpha,
\end{equation}
and $y$ is continuously differentiable as an $\ell^\infty(E)$-valued map.
\end{lemma}

\begin{proof}
Since $\dot x_e+x_e=\Phi_e(x)$ and $\Phi_e\leq b_e$, formula \eqref{eq:variation-of-constants} gives, for any $T>0$,
\[
    x_e(t)\leq\e^{-T}x_e(t-T)+(1-\e^{-T}) b_e.
\]
Boundedness of $x$ and letting $T\to\infty$ imply $x_e(t)\leq b_e\leq2 p_{*}$. On the other hand, from $\dot x_e\geq-x_e$, we have $(\e^t x_e(t))'\geq0$. Therefore, $x_e(t-\tau)\leq\e^\tau x_e(t)$ for any $\tau\geq 0$.  Since $R_v(y)=\sum_{f\in E_v}x_f^\alpha$, it follows that
\[
    R_v(y(t-\tau))\leq\e^{\alpha\tau}R_v(y(t)),
    \qquad
    \rho_v(y(t-\tau))\geq\e^{-\alpha\tau}\rho_v(y(t)).
\]
Apply \eqref{eq:variation-of-constants} to \eqref{eq:y-ode} on $[t-T,t]$ and change variables to obtain
\[
    y_e(t)=\e^{-\gamma T}y_e(t-T)
      +\gamma\int_0^T\e^{-\gamma\tau}
          \sum_{v\in e}\rho_v(y(t-\tau))\dd\tau.
\]
The completeness of the solution allows us to use this identity for every $T>0$.  Its first term is bounded by $(2 p_{*})^\gamma\e^{-\gamma T}$ and tends to zero as $T\to\infty$.  The integrand is nonnegative, so letting $T\to\infty$ yields \eqref{eq:complete-feedback}:
\begin{align*}
    y_e(t)=\gamma\int_0^\infty\e^{-\gamma\tau}
        \sum_{v\in e}\rho_v(y(t-\tau))\dd\tau \geq\gamma\sum_{v\in e}\rho_v(y(t))
        \int_0^\infty\e^{-(\gamma+\alpha)\tau}\dd\tau
     =\gamma\sum_{v\in e}\rho_v(y(t)),
\end{align*}
where we used $\gamma+\alpha=1$ in the last equality.  The upper bound on $x$ also gives
\[
    \sum_{v\in e}\rho_v(y(r))
    \geq\frac{\eta_e}{(2p_*)^\alpha}=m_e^\gamma
    \qquad\text{for every }r\in\R.
\]
Substituting this bound into the same integral representation gives $y_e(t)\geq m_e^\gamma$, hence $x_e(t)\geq m_e$.  Finally, \eqref{eq:y-ode} and \eqref{eq:complete-feedback} give
\[
 -\gamma y_e \leq  \dot y_e \leq \gamma(-y_e+y_e/\gamma) = \alpha y_e,
\]
which proves \eqref{eq:log-y-bound}.    

It remains to check differentiability in the supremum norm.  Put $L:=\max\{\gamma,\alpha\}$.  By \eqref{eq:log-y-bound}, $|\dot y_e|\leq Ly_e$. Differentiating the finite sum defining $R_v$ then gives
\begin{equation}
    \label{eq:log-R-bound}
      \left|\frac{\dd}{\dd t}R_v(y)\right|
    \leq\sum_{e\in E_v}\beta y_e^{\beta-1}|\dot y_e|
    \leq\beta L R_v(y).
\end{equation}
Since $\rho_v=p_v/R_v$, this implies $|\frac{\dd}{\dd t}\rho_v(y)|\leq\beta L\rho_v(y)$. For each vertex $v$, choose any incident edge in \eqref{eq:complete-feedback} to get $\rho_v(y)\leq(2 p_{*})^\gamma/\gamma$. Thus, the coordinate derivatives of both $y$ and $\rho(y)$ are uniformly bounded.  Integrating these bounds makes both maps Lipschitz in their respective supremum norms.  By \eqref{eq:y-ode}, the vector $\dot y$ is therefore continuous in the supremum norm.  The criterion \eqref{eq:sup-diff} now shows that $y\in C^1(\R;\ell^\infty(E))$.
\end{proof}

\subsection{An estimate for differences}

For a bounded linear operator $A$ on $\ell^\infty(V)$, write
\[
    \norm A_{\infty\to\infty}
    :=\sup_{\norm z_\infty\leq1}\norm{Az}_\infty.
\]
Its logarithmic norm is defined by
\[
    \operatorname{Log}_\infty(A)
    :=\lim_{\varepsilon\to 0+}
      \frac{\norm{\mathrm{Id}+\varepsilon A}_{\infty\to\infty}-1}{\varepsilon},
\]
where $\mathrm{Id}$ is the identity operator.  It measures the first-order change of the norm under $\mathrm{Id}+\varepsilon A$.  The limit exists by convexity of the operator norm as a function of $\varepsilon$.  We use the supremum-norm estimate below, following the logarithmic-norm argument in \cite[Proposition~6.4]{MR4864220}.  Its key assumption is that the additional diagonal coefficient equals the off-diagonal row sum. 

\begin{lemma}[Supremum-norm estimate]\label{lem:lognorm-estimate}
Let $I=[s,t]$, let $\gamma>0$, and let $q\in C^1(I;\ell^\infty(V))$.  Suppose that
\begin{equation}\label{eq:balanced-system}
    \dot q_v(r)
    =-\gamma\bigl(1+\theta_v(r)\bigr)q_v(r)
     -\gamma\sum_{u\sim v}\theta_{vu}(r)q_u(r)
     +g_v(r)
\end{equation}
for $r\in I$, where $g\in C(I;\ell^\infty(V))$ and
\[
    \theta_{vu}(r)\geq0,
    \qquad
    \theta_v(r)=\sum_{u\sim v}\theta_{vu}(r),
    \qquad
   B_*:= \sup_{r\in I,\,v\in V}\theta_v(r)<\infty.
\]
Then
\begin{equation}\label{eq:balanced-bound}
    \norm{q(t)}_\infty
    \leq \e^{-\gamma(t-s)}\norm{q(s)}_\infty
      +\int_s^t\e^{-\gamma(t-r)}\norm{g(r)}_\infty\dd r.
\end{equation}
\end{lemma}

\begin{proof}
For $r\in I$, let $\mathcal L_r$ denote the homogeneous operator in \eqref{eq:balanced-system}, namely,
\[
    (\mathcal L_r z)_v:=-\gamma\bigl(1+\theta_v(r)\bigr)z_v
      -\gamma\sum_{u\sim v}\theta_{vu}(r)z_u.
\]
If $0<\varepsilon<[\gamma(1+B_*)]^{-1}$, then, for every $z\in\ell^\infty(V)$,
\[
    |((\mathrm{Id}+\varepsilon\mathcal L_r)z)_v| \leq \bigl(1-\varepsilon\gamma(1+\theta_v(r))\bigr)|z_v|
       +\varepsilon\gamma\sum_{u\sim v}\theta_{vu}(r)|z_u|\leq (1-\gamma\varepsilon)\norm z_\infty,
\]
where the last step used $\sum_{u\sim v}\theta_{vu}(r)=\theta_v(r)$.  Thus $\norm{\mathrm{Id}+\varepsilon\mathcal L_r}_{\infty\to\infty}\leq1-\gamma\varepsilon$ and 
\begin{equation}\label{eq:lognorm-bound}
    \operatorname{Log}_\infty(\mathcal L_r)\leq-\gamma
\end{equation}
uniformly in $r$.

Since $q$ is continuously differentiable in the supremum norm,
\[
    q(r+\varepsilon)
    =(\mathrm{Id}+\varepsilon\mathcal L_r)q(r)
       +\varepsilon g(r)+o(\varepsilon)
\]
in $\ell^\infty(V)$ as $\varepsilon\to 0+$.  The real-valued function $Q(r):=\norm{q(r)}_\infty$ is Lipschitz on $I$ and hence differentiable almost everywhere.  At its points of differentiability, the operator bound \eqref{eq:lognorm-bound} gives
\[
    Q'(r)\leq-\gamma Q(r)+\norm{g(r)}_\infty.
\]
Multiplying by $\e^{\gamma r}$ and integrating yields
\[
    \e^{\gamma t}Q(t)-\e^{\gamma s}Q(s)
    \leq\int_s^t\e^{\gamma r}\norm{g(r)}_\infty\dd r,
\]
which is \eqref{eq:balanced-bound}.  
\end{proof}

We now compare two bounded positive complete solutions.  After the power transformation, the forcing term on each edge is the sum of two vertex contributions.  We collect their differences into vertex variables.  In the resulting vertex equation, each edge contributes equally to the diagonal coefficient and to the coefficient at its other endpoint.  This gives the row-sum identity needed in Lemma \ref{lem:lognorm-estimate}.

Write
\[
    K_\beta:=\max\{1,\beta\}.
\]

\begin{proposition}\label{prop:delayed-contraction}
Let $x,\widetilde x$ be two bounded positive complete solutions of \eqref{eq:x-ode}, and set $y_e=x_e^\gamma$, $\widetilde y_e=\widetilde x_e^\gamma$. Then for all $s<t$,
\begin{equation}\label{eq:delayed-contraction}
    \norm{y(t)-\widetilde y(t)}_\infty
    \leq
    [1+2K_\beta(t-s)]\e^{-\gamma(t-s)}
       \norm{y(s)-\widetilde y(s)}_\infty.
\end{equation}
\end{proposition}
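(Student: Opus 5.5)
The plan is to pass from edge differences $d_e:=y_e-\widetilde y_e$ to vertex differences $q_v:=\rho_v(y)-\rho_v(\widetilde y)$. I will show that $q$ satisfies a balanced system of the form \eqref{eq:balanced-system}, and then feed the resulting decay of $q$ back into the edge equation. Subtracting the two copies of \eqref{eq:y-ode} gives, for $e=\{u,v\}$,
\[
    \dot d_e=\gamma\bigl(-d_e+q_u+q_v\bigr).
\]
By \eqref{eq:variation-of-constants} on $[s,t]$,
\[
    |d_e(t)|\leq \e^{-\gamma(t-s)}|d_e(s)|+2\gamma\int_s^t\e^{-\gamma(t-r)}\norm{q(r)}_\infty\dd r .
\]
So \eqref{eq:delayed-contraction} follows once I show
\[
    \norm{q(r)}_\infty\leq \frac{K_\beta}{\gamma}\,\e^{-\gamma(r-s)}\norm{d(s)}_\infty,
    \qquad r\in[s,t].
\]
Indeed, the integral is then $2K_\beta(t-s)\e^{-\gamma(t-s)}\norm{d(s)}_\infty$. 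This exact shape of \eqref{eq:delayed-contraction} strongly suggests that $q$ should obey the homogeneous ($g=0$) case of Lemma~\ref{lem:lognorm-estimate}, with initial size $\norm{q(s)}_\infty\leq K_\beta\norm{d(s)}_\infty/\gamma$.

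For the initial bound I write $q_v=-\sum_{f\in E_v}c_{vf}d_f$ by the mean value theorem applied to $\lambda\mapsto\rho_v(\widetilde y+\lambda d)$. Then $c_{vf}=\rho_v(\xi)\,\beta\xi_f^{\beta-1}/R_v(\xi)$ at some intermediate point $\xi$. I will bound $\sum_f c_{vf}|d_f|$ using \eqref{eq:complete-feedback}, in the form $\rho_v\leq y_f/\gamma$ for $f\in E_v$, applied to both solutions. Together with $\sum_f\xi_f^\beta=R_v(\xi)$, this gives $\sum_f c_{vf}|d_f|\lesssim(\beta/\gamma)\norm d_\infty$. I will use $K_\beta=\max\{1,\beta\}$ to absorb the case $\beta<1$, where $\xi_f^{\beta-1}$ must be compared with both $y_f^{\beta-1}$ and $\widetilde y_f^{\beta-1}$.

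For the dynamics, differentiating $q_v=-\sum_f c_{vf}d_f$ and inserting $\dot d_f=\gamma(-d_f+q_v+q_{w(f)})$, where $w(f)$ is the other endpoint of $f$, gives
\[
    \dot q_v=-\gamma q_v-\gamma\Bigl(\sum_{f\in E_v}c_{vf}\Bigr)q_v-\gamma\sum_{f\in E_v}c_{vf}\,q_{w(f)}-\sum_{f\in E_v}\dot c_{vf}d_f .
\]
The first three terms are exactly of the balanced form \eqref{eq:balanced-system}, with $\theta_{vw(f)}=c_{vf}\geq0$ and $\theta_v=\sum_f c_{vf}$, because each edge enters once through its own endpoint and once through the other. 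The coefficients are bounded, as in Lemma~\ref{lem:complete-feedback}, and $q\in C^1$ in the supremum norm by the Lipschitz bounds proved there, so Lemma~\ref{lem:lognorm-estimate} applies.

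The main obstacle is the remainder $\sum_f\dot c_{vf}d_f$, which would otherwise enter $g$ and spoil the clean constant. To avoid it, I would first try to choose the linearization so that this term is absent. I would work directly with the exact difference identity, writing
\[
    \dot\rho_v=-\rho_v\,\frac{\dot R_v}{R_v}
    =\gamma\beta\rho_v-\gamma\rho_v\sum_{f\in E_v}\frac{\beta y_f^\beta}{R_v}\cdot\frac{\rho_v+\rho_{w(f)}}{y_f},
\]
and similarly for $\widetilde\rho_v$. I would then subtract the two expressions and group the terms so that the coefficients multiplying $q_v$ and $q_{w(f)}$ are evaluated at a common interpolated state, so that the leftover terms cancel. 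If exact cancellation fails, the fallback is to bound the leftover terms by $C\norm{d(r)}_\infty$ and close the argument with a Grönwall-type argument coupling $\norm d_\infty$ and $\norm q_\infty$. That still yields a bound of the form $(1+C(t-s))\e^{-\gamma(t-s)}$, which is all the Liouville argument later needs.
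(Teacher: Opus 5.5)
Your edge equation $\dot d_e=\gamma(-d_e+q_u+q_v)$ is correct, and so is the balanced structure of the first three terms of $\dot q_v$. The proof breaks at the step you call the main obstacle, and neither of your remedies closes it, so the target $\norm{q(r)}_\infty\leq(K_\beta/\gamma)\e^{-\gamma(r-s)}\norm{d(s)}_\infty$ is never established.

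With $q_v=\rho_v(y)-\rho_v(\widetilde y)$, the term $-\sum_{f\in E_v}\dot c_{vf}d_f$ cannot be removed by regrouping. The reason is that $\dot\rho_v$ depends on the individual $y_f$, $f\in E_v$, through the weights $\beta y_f^{\beta-1}/R_v(y)$, and not on $\rho$ alone. So the vertex differences satisfy no closed equation, and some forcing term involving $d$ must survive.

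The fallback also fails. Set $D(r)=\e^{\gamma(r-s)}\norm{d(r)}_\infty$, $Q(r)=\e^{\gamma(r-s)}\norm{q(r)}_\infty$, and suppose $\norm{g(r)}_\infty\leq C\norm{d(r)}_\infty$. Then Lemma~\ref{lem:lognorm-estimate} and your edge estimate give only
\[
    Q(r)\leq Q(s)+C\int_s^rD(\tau)\dd\tau,
    \qquad
    D(r)\leq D(s)+2\gamma\int_s^rQ(\tau)\dd\tau .
\]
The comparison solution of this second-order system grows like $\e^{\omega(r-s)}$ with $\omega=\sqrt{2\gamma C}$. The best you can conclude is therefore $\norm{d(t)}_\infty\lesssim\e^{-(\gamma-\omega)(t-s)}\norm{d(s)}_\infty$, not a polynomial prefactor times $\e^{-\gamma(t-s)}$. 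The natural estimates here are $\theta_v\leq K_\beta/\gamma$ and $|\dot c_{vf}|\leq Hc_{vf}$ with $H=(|\beta-1|+2\beta)\max\{\gamma,\alpha\}$. They give $C=HK_\beta/\gamma$ and $\omega=\sqrt{2HK_\beta}\geq1>\gamma$ for every $\alpha\in(0,1)$. The bound then does not even decay, and the Liouville argument cannot be run.

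A smaller point concerns your mean-value coefficient. It is evaluated at an intermediate point $\xi$, which is not a solution, so \eqref{eq:complete-feedback} is not available there. The row-sum bound $K_\beta/\gamma$ comes cleanly from the exact secant coefficient $c_{ve}=p_va_e/(R_v(y)R_v(\widetilde y))$, where $a_e=(y_e^\beta-\widetilde y_e^\beta)/(y_e-\widetilde y_e)$. One combines it with $(a^\beta-b^\beta)/(a-b)\leq K_\beta(a^\beta+b^\beta)/(a+b)$ and applies \eqref{eq:complete-feedback} at the two actual solutions.

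The missing idea is to choose a different vertex variable, one for which no derivative of the coefficients ever appears. Let $h=y-\widetilde y$, $(\mathcal C_rz)_v=\sum_{e\in E_v}c_{ve}(r)z_e$, and $(\mathsf M^{\mathsf T}w)_{\{u,v\}}=w_u+w_v$. The paper uses the exponentially filtered difference
\[
    q_v(r)=\gamma\int_s^r\e^{-\gamma(r-\tau)}(\mathcal C_\tau h(\tau))_v\dd\tau,
    \qquad r\in[s,t],
\]
so that $q(s)=0$ and $\dot q+\gamma q=\gamma\mathcal C_rh$. Since $\dot h=-\gamma(h+\mathsf M^{\mathsf T}\mathcal C_rh)$, variation of constants gives the exact identity $h_e(r)=\e^{-\gamma(r-s)}h_e(s)-q_u(r)-q_v(r)$ for $e=\{u,v\}$. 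Substituting this into $\dot q=\gamma(\mathcal C_rh-q)$ produces the balanced system \eqref{eq:balanced-system} with $\theta_{vu}=c_{v,\{u,v\}}$ and forcing $g_v=\gamma\e^{-\gamma(r-s)}(\mathcal C_rh(s))_v$. This forcing satisfies $\norm{g(r)}_\infty\leq K_\beta\e^{-\gamma(r-s)}\norm{h(s)}_\infty$. Lemma~\ref{lem:lognorm-estimate} then gives $\norm{q(t)}_\infty\leq K_\beta(t-s)\e^{-\gamma(t-s)}\norm{h(s)}_\infty$, and the identity for $h$ yields \eqref{eq:delayed-contraction}.

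So the factor $t-s$ comes from resonant forcing of a zero initial value, not from homogeneous decay of $\rho(y)-\rho(\widetilde y)$. The bound $|\dot c_{ve}|\leq Hc_{ve}$ is used only qualitatively, to show $q\in C^1([s,t];\ell^\infty(V))$.
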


\begin{proof}
Write $h=y-\widetilde y$. For $e\in E$, define
\begin{equation}\label{eq:secant-a}
    a_e(r):=\int_0^1\beta
       \bigl(\theta y_e(r)+(1-\theta)\widetilde y_e(r)\bigr)^{\beta-1}
       \dd\theta.
\end{equation}
The fundamental theorem of calculus gives $y_e^\beta-\widetilde y_e^\beta=a_eh_e$.  For $v\in e$, define
\begin{equation}\label{eq:c-ve}
    c_{ve}(r):=\frac{p_v a_e(r)}
       {R_v(y(r))R_v(\widetilde y(r))},
    \qquad
    ( \mathcal{C}_r z)_v:=\sum_{e\in E_v}c_{ve}(r)z_e.
\end{equation}
It follows by definition that
\begin{equation}\label{eq:rho-difference}
    \rho_v(y(r))-\rho_v(\widetilde y(r))=-(\mathcal{C}_rh(r))_v.
\end{equation}
 Fix $s<t$ and define 
\begin{equation}\label{eq:q-def}
    q_v(r):=\gamma\int_s^r\e^{-\gamma(r-\tau)}
        (\mathcal{C}_\tau h(\tau))_v\dd\tau, \quad r \in [s,t].
\end{equation}
In particular, $q(s)=0$ and $\dot q+\gamma q=\gamma \mathcal{C}_rh$. Let $\mathsf M$ be the unsigned incidence matrix and define
\[
    (\mathsf Mz)_v:=\sum_{e\in E_v}z_e,
    \qquad
    (\mathsf M^{\mathsf T}w)_{\{u,v\}}:=w_u+w_v.
\]
Here $\mathsf M^{\mathsf T}:\ell^\infty(V)\to\ell^\infty(E)$ simply sums the two endpoint coordinates, so $\norm{\mathsf M^{\mathsf T}}_{\infty\to\infty}\leq2$.  Equations \eqref{eq:y-ode} and \eqref{eq:rho-difference} yield
\begin{equation}\label{eq:h-factorized}
    \dot h=-\gamma\bigl(h+\mathsf M^{\mathsf T}\mathcal{C}_rh\bigr).
\end{equation}
Applying \eqref{eq:variation-of-constants} to \eqref{eq:h-factorized} gives, for $e=\{u,v\}$,
\begin{equation}\label{eq:h-q}
    h_e(r)=\e^{-\gamma(r-s)}h_e(s)-q_u(r)-q_v(r).
\end{equation}
We first derive the equation for $q$ coordinate-wise.  Substituting \eqref{eq:h-q} into $\dot q+\gamma q=\gamma\mathcal C_rh$, we use
\[
    (\mathcal{C}_r\mathsf M^{\mathsf T}q)_v
    =\sum_{u\sim v}c_{v,\{u,v\}}(r)(q_v+q_u)
    =\theta_v(r)q_v+\sum_{u\sim v}\theta_{vu}(r)q_u,
\]
where
\[
    \theta_{vu}(r):=c_{v,\{u,v\}}(r),
    \qquad
    \theta_v(r):=\sum_{u\sim v}\theta_{vu}(r).
\]
Each edge contributes once to $q_v$ and once to the coordinate at its other endpoint.  We obtain
\begin{equation}\label{eq:q-equation}
    \dot q_v(r)=-\gamma(1+\theta_v(r))q_v(r)
       -\gamma\sum_{u\sim v}\theta_{vu}(r)q_u(r)+\gamma\e^{-\gamma(r-s)}(\mathcal{C}_rh(s))_v.
\end{equation}
The diagonal interaction coefficient equals the off-diagonal row sum, as required in Lemma \ref{lem:lognorm-estimate}.

To apply that lemma, we now verify the uniform coefficient bound and the required differentiability in the supremum norm.

First, we show that $\mathcal{C}_r:\ell^\infty(E)\to\ell^\infty(V)$ is bounded uniformly in $r\in[s,t]$.  For $a,b>0$,
\begin{equation}\label{eq:power-secant}
    \frac{a^\beta-b^\beta}{a-b}
    \leq K_\beta\frac{a^\beta+b^\beta}{a+b},
\end{equation}
with the derivative used when $a=b$.  By symmetry, it suffices to consider $a>b$.  Put $u=\frac12\log(a/b)>0$.  Then
\[
    \frac{a^\beta-b^\beta}{a-b}
       \frac{a+b}{a^\beta+b^\beta}
    =\frac{\tanh(\beta u)}{\tanh u}.
\]
For $0<\beta\leq1$, monotonicity gives $\tanh(\beta u)\leq\tanh u$.  For $\beta\geq1$, concavity and $\tanh0=0$ give $\tanh u\geq\beta^{-1}\tanh(\beta u)$.  This proves \eqref{eq:power-secant}.  The case $a=b$ follows by taking a limit. For $v\in e$, Lemma \ref{lem:complete-feedback} gives
\[
    y_e+\widetilde y_e
    \geq\gamma p_v
       \frac{R_v(y)+R_v(\widetilde y)}
            {R_v(y)R_v(\widetilde y)}.
\]
Combining this with \eqref{eq:power-secant} yields
\[
    c_{ve}(r)\leq
    \frac{K_\beta}{\gamma}
    \frac{y_e(r)^\beta+\widetilde y_e(r)^\beta}
         {R_v(y(r))+R_v(\widetilde y(r))}.
\]
Since the coefficients are nonnegative, the operator norm of $\mathcal{C}_r$ equals the supremum of its row sums.  Summing over $E_v$ therefore gives
\begin{equation}\label{eq:C-bound}
    \norm{\mathcal{C}_r}_{\infty\to\infty}
    =\sup_v\sum_{e\in E_v}c_{ve}(r)
    \leq\frac{K_\beta}{\gamma}.
\end{equation}
Only the combined coefficients $c_{ve}$ are bounded here.  Neither $a_e$ nor $p_v/[R_v(y)R_v(\widetilde y)]$ needs to be uniformly bounded.

We also need continuity in operator norm.  Let $L=\max\{\gamma,\alpha\}$ as in the proof of Lemma \ref{lem:complete-feedback}.  Fix an edge $e$ and put $z_\theta=\theta y_e+(1-\theta)\widetilde y_e$.  By \eqref{eq:log-y-bound}, $|\dot z_\theta|\leq Lz_\theta$.  On each compact time interval, $z_\theta$ has a positive lower bound uniform in $\theta\in[0,1]$.  We may therefore differentiate \eqref{eq:secant-a} under the integral to get
\[
    |\dot a_e|
    \leq\int_0^1\beta|\beta-1|z_\theta^{\beta-2}
             |\dot z_\theta|\dd\theta
    \leq|\beta-1|L a_e.
\]
 Since all factors in $c_{ve}$ are positive,
\[
    \frac{\dot c_{ve}}{c_{ve}}
    =\frac{\dot a_e}{a_e}
      -\frac{\dd}{\dd r}\log R_v(y(r))
      -\frac{\dd}{\dd r}\log R_v(\widetilde y(r)).
\]
The derivative bound \eqref{eq:log-R-bound} for $R_v$ now implies that for any edge $e\in E$ and $v \in e$,
\[
    |\dot c_{ve}|\leq Hc_{ve},
    \qquad \text{where } H:=(|\beta-1|+2\beta)L.
\]
For $s<t$, integration and \eqref{eq:C-bound} give
\[
\norm{\mathcal{C}_t-\mathcal{C}_s}_{\infty\to\infty}\leq\sup_v\sum_{e\in E_v}\int_s^t|\dot c_{ve}(r)|\dd r\leq\frac{HK_\beta}{\gamma}(t-s).
\]
Combined with Lemma \ref{lem:complete-feedback}, this shows that the map $r\mapsto \mathcal{C}_rh(r)$ is continuous in $\ell^\infty(V)$. In view of \eqref{eq:q-def}, we see that $q$ is norm-continuous.  The criterion \eqref{eq:sup-diff}, applied to $\dot q=\gamma(\mathcal{C}_rh-q)$, then gives $q\in C^1([s,t];\ell^\infty(V))$.

By \eqref{eq:C-bound}, the coefficients in \eqref{eq:q-equation} satisfy $\theta_v(r)\leq K_\beta/\gamma$, and the last term has norm at most
\[
    K_\beta\e^{-\gamma(r-s)}\norm{h(s)}_\infty.
\]
Since $q(s)=0$, Lemma \ref{lem:lognorm-estimate} yields
\[
    \norm{q(t)}_\infty \leq K_\beta\norm{h(s)}_\infty
       \int_s^t\e^{-\gamma(t-r)}\e^{-\gamma(r-s)}\dd r=K_\beta(t-s)\e^{-\gamma(t-s)}\norm{h(s)}_\infty,
\]
which proves \eqref{eq:delayed-contraction} in view of \eqref{eq:h-q}.
\end{proof}

\begin{remark}\label{rem:complete-only}
The uniform coefficient bound \eqref{eq:C-bound} uses completeness through Lemma \ref{lem:complete-feedback}.  Proposition \ref{prop:delayed-contraction} therefore compares bounded positive complete solutions.  It does not give a uniform convergence rate for arbitrary forward solutions or for the urn process.
\end{remark}

We now prove Theorem \ref{thm:liouville}.

\begin{proof}[Proof of Theorem \ref{thm:liouville}]
The existence follows from Proposition \ref{prop:equilibrium-existence}.  Fix such an equilibrium $\mu$ and put $y_e^*=\mu_e^\gamma$.  Let $x$ be a bounded positive complete solution and set $y_e=x_e^\gamma$.  Proposition \ref{prop:equilibrium-existence} and Lemma \ref{lem:complete-feedback} bound both $y$ and $y^*$ from above by $(2 p_{*})^\gamma$.  Fix time $t\in\R$.  Since $x$ is complete, we may compare the two trajectories on $[t-T,t]$ for every $T>0$.  Proposition \ref{prop:delayed-contraction} gives
\[
 \norm{y(t)-y^*}_\infty
    \leq(1+2K_\beta T)\e^{-\gamma T}
        \norm{y(t-T)-y^*}_\infty\leq(2 p_{*})^\gamma(1+2K_\beta T)\e^{-\gamma T}
      \longrightarrow 0,
\]
as $T\to\infty$.  Hence $y(t)=y^*$.  Since $t$ is arbitrary, one has $x\equiv\mu$.  Every other non-vanishing equilibrium is bounded by Proposition \ref{prop:equilibrium-existence}.  Applying the same argument to its constant trajectory proves the uniqueness.
\end{proof}

\section{Logarithmic-time stochastic approximation}\label{sec:sa}

We prove Theorem \ref{thm:main} in this section. We first establish local upper and lower bounds for the normalized process.  We then extract complete trajectories from its logarithmic-time shifts and apply Theorem \ref{thm:liouville}.

\subsection{Local bounds and persistence}

Write $Q_v(t)$ for the number of rings of the clock at $v$ by time $t$.  For $e=\{u,v\}$,
\[
    N_t(e)\leq N_0(e)+Q_u(t)+Q_v(t).
\]
The strong law of large numbers for the Poisson process gives
\begin{equation}\label{eq:upper-limsup}
    \limsup_{t\to\infty}X_t(e)\leq b_e\leq2 p_{*}
    \qquad\text{almost surely}.
\end{equation}

For unit firing rates and unit initial weights, persistence follows from \cite[Lemma~3.6]{MR4275961}.  We give a local comparison proof that also covers the present assumptions.

\begin{proposition}[Persistence]\label{prop:persistence}
For every fixed edge $e\in E$,
\begin{equation}\label{eq:persistence}
    m_e\leq\liminf_{t\to\infty}X_t(e)
    \leq\limsup_{t\to\infty}X_t(e)\leq b_e
    \qquad\text{almost surely}.
\end{equation}
\end{proposition}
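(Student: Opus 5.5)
The upper bound in \eqref{eq:persistence} is \eqref{eq:upper-limsup}, so only the lower bound needs proof. We argue one edge $e=\{u,v\}$ at a time. The idea is to compare $N_t(e)$ with the pure-birth dynamics $\dot n=c\,n^\alpha t^{-\alpha}$, working in the variable $Z_t:=N_t(e)^\gamma$. This is the same power that linearizes the forcing term in \eqref{eq:y-ode}.

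\emph{Step 1: bound the competing edges.} Fix $\varepsilon>0$. By \eqref{eq:upper-limsup}, applied to the finitely many edges of $E_u\cup E_v$, there is an almost surely finite random time $T_\varepsilon$ such that $N_{t}(f)\le(2p_*+\varepsilon)t$ for all $t\ge T_\varepsilon$ and $f\in E_u\cup E_v$. For $t\ge T_\varepsilon$ this gives $S_w(N_{t-})\le\deg(w)\bigl((2p_*+\varepsilon)t\bigr)^\alpha$ for $w\in e$. Hence the jump intensity of $N(e)$ satisfies
\[
\lambda_t\ \ge\ \sum_{w\in e}\frac{p_wN_{t-}(e)^\alpha}{\deg(w)((2p_*+\varepsilon)t)^\alpha}=\frac{\eta_e\,N_{t-}(e)^\alpha}{(2p_*+\varepsilon)^\alpha t^\alpha}.
\]
We also have the trivial bound $\lambda_t\le b_e$.

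\emph{Step 2: show $N_t(e)\to\infty$.} Since $N_{t-}(e)\ge N_0(e)>0$, the intensity $\lambda_t$ is at least $c\,t^{-\alpha}$ with $c>0$. Because $\alpha<1$, we get $\int^\infty\lambda_t\dd t=\infty$. By the conditional Borel--Cantelli (L\'evy) argument for counting processes, $e$ is therefore reinforced infinitely often.

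\emph{Step 3: Doob--Meyer decomposition of $Z$.} Write $Z_t=Z_{T}+A_t+M_t$ for $t\ge T:=T_\varepsilon$. Here the compensator is $A_t=\int_T^t\lambda_s\bigl((N_{s-}+1)^\gamma-N_{s-}^\gamma\bigr)\dd s$. By concavity, each jump size $\delta_s:=(N_{s-}+1)^\gamma-N_{s-}^\gamma$ lies between $\gamma(N_{s-}+1)^{-\alpha}$ and $\gamma N_{s-}^{-\alpha}$. Step 1 then gives
\[
\dot A_s\ \ge\ \gamma\,\frac{\eta_e}{(2p_*+\varepsilon)^\alpha}\Bigl(\frac{N_{s-}}{N_{s-}+1}\Bigr)^\alpha s^{-\alpha}.
\]
By Step 2, the factor $\bigl(N_{s-}/(N_{s-}+1)\bigr)^\alpha$ tends to $1$. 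Therefore
\[
\liminf_{t\to\infty} \frac{A_t}{t^\gamma}\ \ge\ \frac{\eta_e}{(2p_*+\varepsilon)^\alpha},
\]
and in particular $A_\infty=\infty$.

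\emph{Step 4: control the martingale part (the main obstacle).} The predictable quadratic variation of $M$ is $\langle M\rangle_t=\int\lambda_s\delta_s^2\dd s\le\int\gamma N_{s-}^{-\alpha}\,\dd A_s$. Since $N_{s-}^{-\alpha}\to0$, this is $o(A_t)$. A cruder bound $\langle M\rangle_t=O(t)$ is useless when $\gamma<1/2$, which is why $\langle M\rangle$ is compared with $A$ rather than with $t$. The jumps of $M$ are bounded by $1$. The strong law for locally square-integrable martingales then applies, for instance via convergence of $\int\dd M_s/(1+A_s)$ together with Kronecker's lemma. On $\{A_\infty=\infty\}$ it yields $M_t/A_t\to0$, hence $Z_t=A_t(1+o(1))$. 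Combined with Step 3, this gives
\[
\liminf_{t\to\infty} \frac{N_t(e)^\gamma}{t^\gamma}\ \ge\ \frac{\eta_e}{(2p_*+\varepsilon)^\alpha}.
\]
Letting $\varepsilon\downarrow0$ along a countable sequence, we obtain $\liminf_t X_t(e)\ge\bigl(\eta_e/(2p_*)^\alpha\bigr)^{1/\gamma}=m_e$ almost surely, which is the lower bound in \eqref{eq:persistence}.
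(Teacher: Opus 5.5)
Your argument is correct, but it takes a different route from the paper's.

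\textbf{The paper's route.} The paper builds a pathwise coupling. It re-orders the selection marks so that the intervals for $(u,e)$ and $(v,e)$ come first, and realizes $N(e)$ as a thinning of a Poisson random measure $\Lambda$. Up to the stopping time $\tau_T$, it then dominates $N(e)$ from below by pure-birth processes $Y^{(T)}$ with rate $\kappa_e s^{-\alpha}z^\alpha$. After the deterministic time change $A_T$, the growth of $Y^{(T)}$ reduces to the jump times $H_n=\sum_k\xi_k/(N_0(e)+k)^\alpha$ of a standard birth process. These are handled by Kolmogorov's convergence theorem and Kronecker's lemma for independent exponentials. Finally, the paper intersects over integer $T$ and uses that $\tau_T=\infty$ for some $T$.

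\textbf{Your route.} You avoid the coupling altogether. The same intensity lower bound, used only pathwise on the random tail $[T_\varepsilon,\infty)$, controls the compensator of $Z_t=N_t(e)^\gamma$, and a martingale strong law normalized by that compensator finishes the proof. The power $\gamma$ is what makes this work, just as in \eqref{eq:y-ode}. The compensator density $\lambda_s\delta_s$ becomes essentially state-independent, of order $\gamma\kappa s^{-\alpha}$. It also gives $\dd\langle M\rangle_s\leq\gamma N_0(e)^{-\alpha}\dd A_s$, so $\int_0^\infty\dd\langle M\rangle_s/(1+A_s)^2<\infty$ by continuity of $A$. That is exactly what the $\int\dd M_s/(1+A_s)$ plus Kronecker argument needs.

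\textbf{What each buys.} Your proof is shorter. It needs neither the extension of the probability space nor the family of stopping times $\tau_T$. The paper's proof uses only sums of independent exponentials, so it is more elementary probabilistically. It also gives a genuine pathwise domination $Y^{(T)}\leq N(e)$ rather than just an asymptotic statement.

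\textbf{A presentational fix.} $T_\varepsilon$ is a last-exit time, not a stopping time. So write the decomposition globally as $Z_t=Z_0+A_t+M_t$ with $A_t=\int_0^t\lambda_s\delta_s\dd s$, and apply the intensity bound pathwise only for $s\geq T_\varepsilon$. The strong law is then applied to the global martingale $M$, and nothing else changes.

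Step~2 is also redundant. Since $N_{s-}(e)\geq N_0(e)$, your Step~3 bound already gives $A_\infty=\infty$, and $Z_t\sim A_t$ then forces $N_t(e)\to\infty$.
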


\begin{proof}
Fix $e=\{u,v\}$ and $B>2p_*$.  The upper bound follows from \eqref{eq:upper-limsup}.  The set $E_u\cup E_v$ is finite. For $t\geq 1$, let $\mathcal E_t$ be the event that
\begin{equation}\label{eq:local-upper-event}
    N_t(f)\leq Bt
    \qquad \text{for all } f\in E_u\cup E_v.
\end{equation}
Then, by \eqref{eq:upper-limsup}, almost surely, $\mathcal E_t$ holds for all sufficiently large $t$.  For each integer $T\geq1$, let 
\[
\tau_T:=\inf\{t\geq T: \mathcal E_t \text{ does not hold.}\}
\]
be the first time $t\geq T$ at which one of the inequalities in \eqref{eq:local-upper-event} fails, with the convention that $\inf\varnothing=\infty$.  For $T\leq t<\tau_T$, the jump intensity of $N(e)$ satisfies
\begin{equation}\label{eq:birth-lower}
    \lambda_e(t):=\Phi_e(N_{t-})
    \geq\kappa_e t^{-\alpha}N_{t-}(e)^\alpha,
    \qquad
    \kappa_e:=B^{-\alpha}\eta_e>0.
\end{equation}

We couple $N(e)$ with pure-birth processes $Y^{(T)}$ for integers $T\geq 1$. We first realize $N(e)$ by a standard Poisson thinning construction.  Superposing the two independent Poisson clocks at $u$ and $v$, of rates $p_u$ and $p_v$, yields a Poisson clock of rate $b_e=p_u+p_v$.  Conditional on a ring at time $s$, the firing vertex $w\in\{u,v\}$ is selected with probability $p_w/b_e$, and, given $w$, the edge $f\in E_w$ is selected with probability $N_{s-}(f)^\alpha/S_w(N_{s-})$.  Equivalently, both choices are recovered from a single uniform mark on $[0,b_e]$ by partitioning $[0,b_e]$ into intervals of lengths $p_w N_{s-}(f)^\alpha/S_w(N_{s-})$ indexed by pairs $(w,f)$, in a fixed order with the two intervals for $(u,e)$ and $(v,e)$ placed first, and the interval containing the mark identifies $(w,f)$.  This re-ordering depends only on the past $\mathcal F_{s-}$ and preserves the conditional law of the firing vertex and the selected edge.  Together with the ring times, these marks form a Poisson random measure on $[0,\infty)\times[0,b_e]$ with intensity $\dd s\,\dd x$, see e.g. \cite[Theorem 5.6]{MR3791470}.  The first two intervals have total length $\lambda_e(s)$, so $N(e)$ jumps precisely at the points $(s,x)$ with $x\leq\lambda_e(s)$.  On an extension of the probability space, add an independent Poisson random measure on $[0,\infty)\times(b_e,\infty)$ with the same intensity, and let $\Lambda$ denote the combined measure on $[0,\infty)\times[0,\infty)$. In particular, $\Lambda$ is a Poisson random measure on $[0,\infty)\times[0,\infty)$ with intensity $\dd s\,\dd x$.

Put $r(s,z):=\kappa_e s^{-\alpha}z^\alpha$.  For every integer $T\geq1$, start $Y^{(T)}$ from $Y_T^{(T)}=N_0(e)$ and let it jump by one at the points $(s,x)\in\Lambda$ with $s>T$ and $x\leq r(s,Y_{s-}^{(T)})$.  Its jump rate is $r$.  Since $\alpha<1$, for $s\geq T$ this rate is bounded by $C_T(1+z)$ for some finite constant $C_T$, so $Y^{(T)}$ is non-explosive and defined for every $t\geq T$, including after $\tau_T$.

A jump of $Y^{(T)}$ at time $s$ occurs at an atom $(s,x)\in\Lambda$ with $x\leq r(s,Y_{s-}^{(T)})$.  On the event $\{s<\tau_T,\ Y_{s-}^{(T)}\leq N_{s-}(e)\}$, the monotonicity of $r(s,\cdot)$ and the inequality \eqref{eq:birth-lower} yield
$$
    x\leq r(s,Y_{s-}^{(T)})\leq r(s,N_{s-}(e))\leq\lambda_e(s)\leq b_e,
$$
so $(s,x)$ is also an atom of $N(e)$.  The two processes have only finitely many jumps on compact time intervals. Starting from $Y_T^{(T)}\leq N_T(e)$ and comparing at their successive jump times therefore gives
\begin{equation}\label{eq:birth-comparison}
    Y_t^{(T)}\leq N_t(e),\qquad T\leq t<\tau_T.
\end{equation}
Note that all these comparison processes $Y^{(T)}$ are built from the same $\Lambda$.

We show that almost surely,
\begin{equation}
    \label{eq:birth-asymptotic}
    \lim_{t\to \infty} \frac{Y_t^{(T)}}t = \kappa_e^{1/\gamma}.
\end{equation}
Let $(\widetilde Y_s)_{s\geq0}$ be a pure-birth process started from $\widetilde Y_0=N_0(e)$, with jump rate $(N_0(e)+k)^\alpha$ at state $N_0(e)+k$.  The deterministic time change
\[
    A_T(t):=\kappa_e\int_T^t s^{-\alpha}\dd s
       =\frac{\kappa_e}{\gamma}(t^\gamma-T^\gamma)
\]
shows that $(\widetilde Y_{A_T(t)})_{t\geq T}$ has the same law as $Y^{(T)}$.  It therefore suffices to prove
\begin{equation}\label{eq:Ytildeasy}
    \lim_{a\to\infty}\frac{\widetilde Y_a}{a^{1/\gamma}}
    =\gamma^{1/\gamma}
    \qquad\text{almost surely}.
\end{equation}
The process $\widetilde Y$ is non-explosive and its successive jump times are 
\[
    H_n=\sum_{k=0}^{n-1}\frac{\xi_k}{(N_0(e)+k)^\alpha}, \quad n\geq 1,
\]
where $(\xi_k)_{k\geq0}$ are independent mean-one exponential variables.
Write $\bar{H}_n=\E H_n$. By a comparison with the integral $\int u^{-\alpha} \dd u$, we see that $\bar{H}_n\sim n^\gamma/\gamma$, and thus,
\[
    \sum_{k=0}^\infty
       \frac{1}{(N_0(e)+k)^{2\alpha}\bar{H}_{k+1}^2}<\infty.
\]
The independent centered series
\[
    \sum_{k=0}^\infty
       \frac{\xi_k-1}{(N_0(e)+k)^\alpha \bar{H}_{k+1}}
\]
has summable variances, so Kolmogorov's convergence theorem shows that it converges almost surely.  Kronecker's lemma then yields $(H_n-\bar{H}_n)/\bar{H}_n\to0$.  Thus $H_n\sim n^\gamma/\gamma$ almost surely. For $s\geq 0$, define $J(s):=\max\{n\geq0:H_n\leq s\}$, with $H_0=0$.  Since $H_{J(s)}\leq s<H_{J(s)+1}$, inversion of this asymptotic gives $J(s)\sim(\gamma s)^{1/\gamma}$ almost surely.  The identity $\widetilde Y_s=N_0(e)+J(s)$ proves \eqref{eq:Ytildeasy}, and the time change proves \eqref{eq:birth-asymptotic}.

For each integer $T$, \eqref{eq:birth-asymptotic} holds almost surely.  By taking the countable intersection of these events, we may use the estimate simultaneously for all such $T$.  Almost surely, \eqref{eq:local-upper-event} holds for all $t\geq T$ for some integer $T$, so $\tau_T=\infty$.  The comparison then gives
\[
    \liminf_{t\to\infty}X_t(e)
    \geq\left(\frac{\eta_e}{B^\alpha}\right)^{1/\gamma}
    \qquad\text{almost surely}.
\]
Apply this conclusion for $B=2p_*+1/n$, $n\geq1$, on their countable intersection of probability-one events, and let $n\to\infty$.  This gives the lower bound $m_e$ in \eqref{eq:persistence}.
\end{proof}

\subsection{Logarithmic time and complete limit}

We use the natural filtration generated by the clocks and the choices made at their rings.  The predictable intensity of $N_t(e)$ is $\Phi_e(N_{t-})$, where $\Phi$ is defined in \eqref{eq:S-Phi-F}.  Since $N_{s-}=N_s$ for Lebesgue-almost every $s$, we may write the compensator without left limits.  Consequently,
\begin{equation}\label{eq:M-def}
    M_t(e):=N_t(e)-N_0(e)-\int_0^t\Phi_e(N_s)\dd s
\end{equation}
is a square-integrable martingale.  Since $0\leq\Phi_e\leq2 p_{*}$,
\begin{equation}\label{eq:M-qv}
    \langle M(e)\rangle_t
    =\int_0^t\Phi_e(N_s)\dd s
    \leq2 p_{*}t.
\end{equation}
Define, for $t\geq1$,
\begin{equation}\label{eq:Z-def}
    Z_t(e):=\int_1^t\frac1s\dd M_s(e).
\end{equation}
Then
\begin{equation}\label{eq:Z-qv}
    \E\langle Z(e)\rangle_\infty
    \leq2 p_{*}\int_1^\infty\frac{\dd s}{s^2}
    <\infty.
\end{equation}
Thus $Z_t(e)$ converges almost surely and in $L^2$ as $t\to\infty$.

We put the process on logarithmic time by setting
\begin{equation}\label{eq:Xhat-def}
    \widehat X_s(e):=X_{\e^s}(e)
    =\e^{-s}N_{\e^s}(e),
    \qquad s\geq0.
\end{equation}
Since $1/t$ is continuous and of bounded variation,
$\dd X_t(e)=t^{-1}\dd N_t(e)-t^{-1}X_t(e)\dd t$.
Integrating over $[1,t]$ and using \eqref{eq:M-def} gives
\[
    X_t(e)-X_1(e)
    =\int_1^t\frac{F_e(X_s)}s\dd s
      +\int_1^t\frac{\dd M_s(e)}s,
\]
where the homogeneity of degree zero of $\Phi$ was used in the form $\Phi_e(N_s)=\Phi_e(X_s)$.  The change of variables $s=\e^r$ now gives, for $0\leq a\leq b$,
\begin{equation}\label{eq:log-SA}
    \widehat X_b(e)-\widehat X_a(e)
    =\int_a^bF_e(\widehat X_r)\dd r
      +Z_{\e^b}(e)-Z_{\e^a}(e).
\end{equation}

Because $E$ is countable, \eqref{eq:persistence}, \eqref{eq:upper-limsup}, and the convergence of \eqref{eq:Z-def} hold simultaneously for every edge on a single event of probability one.  From now on, we work on this event.

\begin{proposition}
\label{prop:complete-orbit-compactness}
On the probability-one event fixed above, for any sequence $(s_n)$ tending to infinity, there is a subsequence $(s_{n_k})$ and a positive complete solution $x:\R\to(0,\infty)^E$ of \eqref{eq:x-ode} such that
\begin{equation}\label{eq:path-convergence}
    \lim_{k\to\infty}\sup_{ |r|\leq T}
    |\widehat X_{s_{n_k}+r}(e)-x_e(r)|
    =0
\end{equation}
for every $0<T<\infty$ and every $e\in E$.  Moreover,
\begin{equation}\label{eq:limit-upper}
    m_e\leq x_e(r)\leq b_e\leq2p_*
    \qquad\text{for every }e\in E\text{ and }r\in\R.
\end{equation}
\end{proposition}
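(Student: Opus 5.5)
The argument is a diagonal Arzelà--Ascoli compactness argument applied edge by edge to the shifted paths $r\mapsto\widehat X_{s_n+r}(e)$. The estimates come from the integral equation \eqref{eq:log-SA} together with Proposition~\ref{prop:persistence} and the almost sure convergence of $Z_t(e)$.

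First, fix an edge $e$ and $T>0$. For $n$ large, $s_n-T\geq0$, so the shifted path is defined on $[-T,T]$. By \eqref{eq:persistence}, $\widehat X_{s}(e)$ is bounded on $[s_0,\infty)$ for some $s_0$ depending on $e$. Hence the family is eventually uniformly bounded on $[-T,T]$. For equicontinuity I use \eqref{eq:log-SA}:
\[
 |\widehat X_{s_n+b}(e)-\widehat X_{s_n+a}(e)|\leq \sup|F_e|\,|b-a|+|Z_{\e^{s_n+b}}(e)-Z_{\e^{s_n+a}}(e)|.
\]
Since $\Phi_e\leq b_e$ and $\widehat X$ is eventually bounded, we have $|F_e|\leq 2p_*+C_e$. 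Since $Z_t(e)$ converges as $t\to\infty$, the oscillation $\sup_{u,w\geq \e^{s_n-T}}|Z_u(e)-Z_w(e)|$ tends to $0$. The family is therefore asymptotically equicontinuous. A version of Arzelà--Ascoli that allows a vanishing modulus error then gives a uniformly convergent subsequence on $[-T,T]$. Alternatively, I can subtract the $Z$ part, which converges uniformly to zero, and apply the classical version to the Lipschitz remainder. Enumerating the countably many edges and taking $T=1,2,\dots$, a diagonal extraction yields one subsequence $(s_{n_k})$ and limits $x_e$ satisfying \eqref{eq:path-convergence}.

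Second, I identify the limit. The bounds \eqref{eq:limit-upper} follow immediately from \eqref{eq:persistence}, since liminf and limsup along $s\to\infty$ control every shifted path pointwise. Next I pass to the limit in \eqref{eq:log-SA} written for the shifted process on $[a,b]$. The martingale term vanishes by the Cauchy property of $Z(e)$. For the drift, $F_e(x)$ depends only on the finitely many coordinates in $E_u\cup E_v$ for $e=\{u,v\}$. On the set where those coordinates lie in $[m_f/2,\,2b_f]$, the denominators $S_u,S_v$ are bounded below, so $F_e$ is continuous there. Eventually the shifted paths of these finitely many edges stay in that set uniformly on $[a,b]$; this uses the uniform convergence together with the positive lower bounds. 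Hence $F_e(\widehat X_{s_{n_k}+r})\to F_e(x(r))$ uniformly on $[a,b]$. Therefore
\[
x_e(b)-x_e(a)=\int_a^bF_e(x(r))\dd r .
\]
The integrand is continuous in $r$, so $x_e$ is $C^1$ and solves \eqref{eq:x-ode} on all of $\R$. Positivity follows from $x_e\geq m_e>0$.

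I expect the main obstacle to be the uniformity needed in the equicontinuity step. The lower bound from persistence holds only eventually and with an edge-dependent threshold, so $F_e$ is not globally continuous near the boundary. This is why the diagonal argument proceeds edge by edge, and why continuity of $F_e$ is used only on a compact neighbourhood bounded away from zero that the shifted paths eventually enter. Nothing uniform over all edges is needed, since the convergence in \eqref{eq:path-convergence} is only coordinate-wise.
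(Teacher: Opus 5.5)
Your proposal is correct and follows essentially the same route as the paper. The ingredients match: eventual two-sided bounds from persistence, removal of the martingale increment to obtain a Lipschitz approximant, a diagonal Arzel\`a--Ascoli extraction over edges and windows $[-m,m]$, and identification of the limit via uniform continuity of $F_e$ on a finite-dimensional box bounded away from zero. Your Lipschitz remainder is exactly the paper's $\bar Y_n^e$. Note that what tends uniformly to zero is the increment $Z_{\e^{s_n+r}}(e)-Z_{\e^{s_n-T}}(e)$, not $Z$ itself, whose limit is generally a nonzero constant. The only cosmetic difference is that you take the box $[m_f/2,2b_f]$ directly from persistence, whereas the paper chooses $\delta$ from the positivity of the limit on the compact interval.
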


\begin{proof}
Fix an edge $e$ and $T>0$.  By \eqref{eq:persistence}, $X_t(e)\geq m_e/2$ for all sufficiently large $t$.  Together with \eqref{eq:upper-limsup}, this implies that, for all sufficiently large $n$ (with the threshold allowed to depend on $e$ and $T$),
\begin{equation}\label{eq:window-box}
    m_e/2\leq \widehat X_{s_n+r}(e)\leq2 p_{*}+1
    \qquad\text{for every }r\in[-T,T].
\end{equation}
Since $0\leq\Phi_e\leq2 p_{*}$, it follows that
\[
    |F_e(\widehat X_{s_n+r})|\leq2 p_{*}+1
\]
on the same window.  The almost sure convergence of $Z_t(e)$ also implies
\begin{equation}\label{eq:noise-window}
    \sup_{r,r'\in[-T,T]}
    |Z_{\e^{s_n+r}}(e)-Z_{\e^{s_n+r'}}(e)|
    \longrightarrow0.
\end{equation}
The shifted processes are c\`adl\`ag rather than continuous, so we apply Arzel\`a--Ascoli to a continuous approximation.  For sufficiently large $n$ (such that $s_n\geq T$) and $r\in[-T,T]$, define
\begin{equation}\label{eq:continuous-approximation}
    \bar{Y}_n^e(r)
    :=\widehat X_{s_n-T}(e)
      +\int_{-T}^r F_e(\widehat X_{s_n+\tau})\dd\tau.
\end{equation}
The functions $\bar{Y}_n^e$ are continuous and $(2 p_{*}+1)$-Lipschitz on $[-T,T]$.  By \eqref{eq:log-SA},
\begin{equation}\label{eq:approximation-error}
    \sup_{r\in[-T,T]}
    |\widehat X_{s_n+r}(e)-\bar{Y}_n^e(r)|
    \leq
    \sup_{r\in[-T,T]}
    |Z_{\e^{s_n+r}}(e)-Z_{\e^{s_n-T}}(e)|
    \longrightarrow0.
\end{equation}
Thus the shifted $e$-coordinates are uniformly approximated by a bounded equi-Lipschitz family on every compact interval.

To apply the Arzel\`a--Ascoli theorem simultaneously to all coordinates and compact time intervals, enumerate $E=\{e_j:1\leq j\leq |E|\}$, allowing $|E|=\infty$.  We construct the desired subsequence inductively.  Set $s_n^{(0)}:=s_n$ for every $n$.  At stage $m\geq1$, given the previous subsequence $(s_n^{(m-1)})$, let $\bar{Y}_n^{e_j}$ denote the approximation \eqref{eq:continuous-approximation} built from $s_n^{(m-1)}$ in place of $s_n$, with $T=m$.  These functions are uniformly bounded, and equi-continuous on $[-m,m]$.  Arzel\`a--Ascoli then yields a further subsequence $(s_n^{(m)})$ along which $\{\bar{Y}^{e_j}\}_{1\leq j \leq \min\{m,|E|\}}$ converge uniformly on $[-m,m]$ to continuous limits.  By \eqref{eq:approximation-error}, the shifted processes $r\mapsto\widehat X_{s_n^{(m)}+r}(e_j)$, $1\leq j \leq \min\{m,|E|\}$, converge to the same limits.  The diagonal subsequence $s_{n_k}:=s_{n_k}^{(k)}$ then has the following property: for every edge $e_j$ and every $m\geq j$, the functions $(r\mapsto\widehat X_{s_{n_k}+r}(e_j))_{k\geq1}$ converge uniformly on $[-m,m]$.  Writing $x_{e_j}(r)$ for the limit, the limits on overlapping windows $[-m,m]$ and $[-m',m']$ agree on their intersection, so each $x_e:\R\to(0,\infty)$ is well defined and satisfies \eqref{eq:path-convergence}.  Positivity follows from the lower bound in \eqref{eq:window-box}.  The liminf and limsup bounds in \eqref{eq:persistence} give $m_e\leq x_e(r)\leq b_e$ for every $e$ and $r$.

It remains to identify the limiting equation.  Fix $e=\{u,v\}$ and a compact time interval $I$.  The coordinate $F_e$ depends only on the finitely many coordinates in $E_u\cup E_v$.  The corresponding limiting coordinates are positive and continuous on $I$.  We may therefore choose $\delta>0$ such that $x_f(r)\geq2\delta$ for every $f\in E_u\cup E_v$ and $r\in I$.  By \eqref{eq:path-convergence}, these coordinates of the shifted paths all lie in $[\delta,2 p_{*}+1]$ for sufficiently large $k$.  The function $F_e$ is continuous on this finite-dimensional compact box, where both its denominators are positive.  Its uniform continuity gives
\[
    F_e(\widehat X_{s_{n_k}+\cdot})
    \longrightarrow F_e(x(\cdot))
\]
uniformly on $I$.  At the same time, the martingale increment in \eqref{eq:log-SA} tends to zero uniformly on $I$ by \eqref{eq:noise-window}.  Passing to the limit in \eqref{eq:log-SA} yields
\[
    x_e(b)-x_e(a)=\int_a^bF_e(x(r))\dd r
\]
for all $a<b$.  Since $F_e(x(\cdot))$ is continuous, $x_e$ is continuously differentiable and satisfies \eqref{eq:x-ode}.  The construction works on every compact interval, so $x$ is a complete solution.
\end{proof}

The extraction above uses convergence of each coordinate.  The random vectors $\widehat X_s$ need not belong to $\ell^\infty(E)$, since the initial weights may be unbounded over $E$.  The limiting complete solution, however, satisfies the uniform bound \eqref{eq:limit-upper}.  We can therefore apply the comparison in $\ell^\infty(E)$ from Section \ref{sec:ode} to this limit.

\begin{proof}[Proof of Theorem \ref{thm:main}]
The existence and uniqueness of the non-vanishing equilibrium follow from Proposition \ref{prop:equilibrium-existence} and Theorem \ref{thm:liouville}.  It remains to prove convergence of the random process.

Suppose, on the probability-one event fixed above, that \eqref{eq:main-convergence} fails for some edge $e$.  Then there are $\varepsilon>0$ and a sequence $s_n\to\infty$ such that
\begin{equation}\label{eq:contradiction-sequence}
    |\widehat X_{s_n}(e)-\mu_e|\geq\varepsilon
    \qquad\text{for every }n.
\end{equation}
By Proposition \ref{prop:complete-orbit-compactness}, a subsequence of the shifted processes converges as in \eqref{eq:path-convergence} to a positive complete solution $x$.  We retain the notation $(s_n)$ for this subsequence.  The limit satisfies $0<x_f(r)\leq2 p_{*}$ for all $f$ and $r$.  Theorem \ref{thm:liouville} gives $x(r)\equiv\mu$.  In particular,
\[
    \widehat X_{s_n}(e)\longrightarrow x_e(0)=\mu_e,
\]
contradicting \eqref{eq:contradiction-sequence}.  Thus $X_t(e)\to\mu_e$ for every edge $e$.  Since all the preceding statements hold simultaneously on a single probability-one event, the convergence is simultaneous over the countable edge set.  
\end{proof}

\section{Acknowledgments}

The author would like to thank Angel, Hermon and Yao for sharing an earlier version of their manuscript and for bringing the problem to his attention. During the preparation of this manuscript, the author used GPT 6 to improve language and assist with proofreading. The author is supported by the China Postdoctoral Science Foundation under Grant Numbers 2025M773086 and 2026T190815, and the National Natural Science Foundation of China under Grant Number 12271284.

\bibliographystyle{plain}
\bibliography{warm_references}

\end{document}